\newtheorem{nummer}{ }
\newtheorem{thm}[nummer]{\bf Theorem}
\newtheorem{prop}[nummer]{\bf Proposition}
\newtheorem{lem}[nummer]{\bf Lemma}
\newtheorem{cor}[nummer]{\bf Corollary}
\newtheorem{ques}{\bf Question}
\newtheorem{rmk}{\bf Remark}
\newcommand{\rmksty}{\rm}
\newtheorem*{alg}{\bf Algorithm}
\newcommand{\algsty}{\rm}
\newcommand{\ie} {\sl i.e.}
\newcommand{\eg} {\sl e.g.}
\newcommand{\dpp}{double-pytha\-potent}
\newcommand{\qpp}{quadratic pytha\-potent}
\newcommand{\hppp}{pytha\-potent pair of degree~$h$}
\newcommand{\cppp}{cubic pytha\-potent pair}
\newcommand{\dppp}{double-pytha\-potent pair}
\newcommand{\qppp}{quadratic pytha\-potent pair}
\newcommand{\qppps}{quadratic pytha\-potent pairs}
\newcommand{\pp}{pytha\-gorean pair}
\newcommand{\pps}{pytha\-gorean pairs}
\newcommand{\tor}[2]{\Z/#1\Z\times\Z/#2\Z}
\newcommand{\Q}{\mathds{Q}}
\newcommand{\Z}{\mathds{Z}}
\newcommand{\THM}{Theorem}
\newcommand{\LEM}{Lemma}
\newcommand{\tq}{\tilde q}
\newcommand{\tp}{\tilde p}
\newcommand{\tb}{\tilde b}
\newcommand{\ta}{\tilde a}
\newcommand{\C}{\Gamma}
\newcommand{\Cab}{\C_{a,b}}
\newcommand{\Caabb}{\C_{a^2,b^2}}
\newcommand{\Cabh}{\C_{a^h,b^h}}
\def\opargproof[#1]{\par\noindent {\bf #1 }}
\definecolor{darkgreen}{rgb}{0,.6,0}
\begin{document}
\begin{center}
{\LARGE\bf Pairing Powers of Pytha\-gorean Pairs}

\medskip

{\small Lorenz Halbeisen}\\[1.2ex] 
{\scriptsize Department of Mathematics, ETH Zentrum,
R\"amistrasse\;101, 8092 Z\"urich, Switzerland\\ lorenz.halbeisen@math.ethz.ch}\\[1.8ex]
{\small Norbert Hungerb\"uhler}\\[1.2ex] 
{\scriptsize Department of Mathematics, ETH Zentrum,
R\"amistrasse\;101, 8092 Z\"urich, Switzerland\\ norbert.hungerbuehler@math.ethz.ch}\\[1.8ex]
{\small Arman Shamsi Zargar}\\[1.2ex] 
{\scriptsize Department of Mathematics and Applications, University of Mohaghegh Ardabili,
Ardabil, Iran\\ zargar@uma.ac.ir}
\end{center}

\hspace{5ex}{\small{\it key-words\/}: pytha\-gorean pair, pytha\-potent pair of arbitrary degree, elliptic curve}

\hspace{5ex}{\small{\it 2020 Mathematics Subject 
Classification\/}: {\bf 11D72} 11G05

\begin{abstract}\noindent
{\small A pair $(a,b)$ of positive integers is a {\it \pp\/} if 
\hbox{$a^2+b^2$} is a square. A pytha\-gorean pair $(a,b)$ is called {a {\it\hppp\/}} if there is another {\pp} $(k,l)$, which is not a multiple
of $(a,b)$, such that $(a^hk,b^hl)$ is a {\pp}. To each {\pp} $(a,b)$ we assign an elliptic curve $\Gamma_{a^h,b^h}$ for $h\geq 3$
with torsion group isomorphic to $\tor 2 4$ such that $\Gamma_{a^h,b^h}$ has positive rank over $\Q$ if and only if 
$(a,b)$ is {a \hppp}. As a side result, we get that if $(a,b)$ is {a {\hppp}}, then there exist infinitely many {\pps} $(k,l)$, not multiples of each other, such that
$(a^hk,b^hl)$ is a {\pp}. In particular, we show that any {\pp\/} is always a pytha\-potent pair of degree~$3$. In a previous work, pytha\-potent pairs of degrees~$1$ and $2$ have been studied.} 
\end{abstract}

\section{Introduction}
A {\it \pp\/} is a pair $(a,b)$ of positive integers such that 
\hbox{$a^2+b^2$} is a square. We adopt the usual notation
$a^2+b^2=\Box\,$ for this. A pytha\-gorean pair $(a,b)$ is called
a {\it\hppp\/} if there is another {\pp} $(k,l)$, which is not a multiple
of $(a,b)$, such that $(a^hk,b^hl)$ is also a {\pp}, {\ie},
$$a^2+b^2=\Box\,,\qquad k^2+l^2=\Box\,,\qquad\text{and}\qquad (a^h k)^2+(b^h l)^2=\Box\,.$$
To simplify the language we will call a pytha\-potent pair of degree~$3$, $4$ and $5$ a {\em cubic, quartic\/} and {\em quintic pythapotent pair}, respectively. We will also keep the definition of a {\dpp} and {\qppp} given in \cite{JNT} that address a pytha\-potent pair of degree~$1$ and $2$, respectively. 

As the pair of squares $(a^2,b^2)$ of a {\pp} $(a,b)$ is never a {\pp}, it is natural to ask whether the Hadamard--Schur products $(a^hk,b^h\ell), h \geq 1$, of {\pps} can be a {\pp} or not. For a {\dpp} and {\qppp}, the question has been investigated in \cite{JNT}. 
More precisely, it has been shown that for each {\pp} $(a,b)$, the elliptic curve $\Cab$ ($\Caabb$, resp.) has torsion group isomorphic to $\tor 2 4$ ($\tor 2 8$, resp.) and that 
$(a,b)$ is a {\dpp} ({\qpp}, resp.) pair if and only if $\Cab$ ($\Caabb$, resp.) has positive rank over $\Q$. With the points of infinite order on the curve $\Cab$ ($\Caabb$, resp.) infinitely many {\pps} $(k,l)$ can be generated, not multiples of each other, such that $(ak,bl)$ ($(a^2k,b^2l)$, resp.) are {\pps}. Moreover, every elliptic curve $\Gamma$
with torsion group $\tor 2 8$ is isomorphic to a curve of the form $\Caabb$ for some
{\pp} $(a,b)$. 

In this work, we will answer affirmatively the question for $h\geq 3$, which generalizes the results of \cite{JNT} concerning {\dpp} and {\qppps}. The question will lead, indeed, again in a natural way to associated elliptic curves of positive rank over $\Q$.

For a positive integer $h$ we assign the elliptic curve 
$$
\Cabh:\hspace*{3ex} y^2\ =\ x^3+(a^{2h}+b^{2h})x^2+a^{2h}b^{2h}x
$$
to the {\pp} $(a,b)$.
We will show that the curve $\Cabh$ with $h\geq 3$ has torsion group isomorphic to $\tor 2 4$ and that $(a,b)$ is a {\hppp} if and only if $\Cabh$ has positive rank over $\Q$.
With the points of infinite order on the curve $\Cabh$ we can generate
infinitely many {\pps} $(k,l)$, not multiples of each other, such that
$(a^h k,b^h l)$ are {\pps}.

\noindent {\bf Examples.} We give an example of {\cppp} and of a quartic  pytha\-potent pair. 
\begin{enumerate}
	\item 
	The {\pp} $(a,b)=(3,4)$ is a {\cppp}. Indeed, for the {\pp}  $(k,l)=(8,15)$ we have  
	$$(3^3\cdot 8)^2+(4^3\cdot 15)^2=984^2.$$
	Since the rank of the elliptic curve $\C_{3^4,4^4}$ is~$1$, $(3,4)$ is also a quartic pytha\-potent pair. 
    \item  
    	The {\pp} $(a,b)=(3,4)$ is a quartic pytha\-potent pair. Indeed, for the {\pp}  $(k,l)=(176,57)$ we have  
	$$(3^4\cdot 176)^2+(4^4\cdot 57)^2=20400^2.$$
	However, since the rank of the elliptic curve
    	$\C_{3^5,4^5}$ is~$0$, $(3,4)$ is not a quintic pytha\-potent pair.
   \end{enumerate}

\begin{rmk}
{\rmksty In \cite{JNT}, the parametrization $\Caabb$ 
for elliptic curves with torsion group $\tor 2 8$, 
where $(a,b)$ is a {\pp}, was obtained 
by using Schroeter's construction of cubic curves with 
line involutions (see \cite{schroeter_paper}).
Other new parametrizations obtained by Schroeter's 
construction for elliptic curves 
with torsion groups $\Z/10\Z$, $\Z/12\Z$, and $\Z/14\Z$ can be found in \cite{rabarison_paper}.
Furthermore, the curves $\C_{a^h,b^h}$, where $(a,b)$ is a {\pp},
are obtained by replacing the terms
$a^4+b^4$ and $a^4b^4$ in the parametrization of
$\Caabb$ by $a^{2h}+b^{2h}$ and $a^{2h}b^{2h}$, respectively.}
\end{rmk}

\section{Pytha\-potent Pairs of Degree~$h$}

We first show that the curve $\C_{a^h,b^h}$ has torsion group isomorphic to $\tor 2 4$ for any positive integer $h\neq 2$, and then we show how we obtain {\pps} $(k,l)$ from a point on $\C_{a^h,b^h}$ whose $x$-coordinate is a square such that $(a^hk,b^hl)$ is a {\pp}.

\begin{prop} If $(a,b)$ is a {\pp}, then the elliptic curve 
	$$
	\C_{a^h, b^h}:\hspace*{3ex} y^2\ =\ x^3+(a^{2h}+b^{2h})x^2+a^{2h}b^{2h}x\,,
	$$
	has torsion group $\tor 2 4$ for any positive integer $h\neq 2$.
\end{prop}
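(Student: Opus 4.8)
The plan is to exhibit the subgroup $\tor 2 4$ by hand and then let Mazur's classification rule out anything larger. First I would factor the cubic as
$$\Cabh:\quad y^2\ =\ x\,(x+a^{2h})(x+b^{2h}),$$
so that $(0,0)$, $(-a^{2h},0)$, $(-b^{2h},0)$ are rational $2$-torsion points and $\tor 2 2\subseteq\Cabh(\Q)_{\mathrm{tors}}$. Next I would write down the candidate point of order~$4$,
$$P\ =\ \bigl(a^hb^h,\ a^hb^h(a^h+b^h)\bigr),$$
note that it lies on the curve since $y_P^2=a^hb^h\cdot a^h(a^h+b^h)\cdot b^h(a^h+b^h)=\bigl(a^hb^h(a^h+b^h)\bigr)^2$, and confirm $2P=(0,0)$ through the duplication formula: as the coefficient of~$x$ equals $B=a^{2h}b^{2h}=x_P^2$, the $x$-coordinate of $2P$ is $(x_P^2-B)^2/(4y_P^2)=0$. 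Thus $\tor 2 4\subseteq\Cabh(\Q)_{\mathrm{tors}}$.

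By Mazur's theorem the torsion group lies in the family $\tor 2 {2N}$ with $N\in\{1,2,3,4\}$, and the only members containing $\tor 2 4$ are $\tor 2 4$ and $\tor 2 8$ (note $\tor 2 6$ has order~$12$, so by Lagrange it has no subgroup of order~$8$). Hence it suffices to show $\Cabh(\Q)$ has no point of order~$8$. Such a point would double to a rational point of order~$4$, so I must check that no order-$4$ point lies in $2\,\Cabh(\Q)$. The $2$-torsion points $(-a^{2h},0)$ and $(-b^{2h},0)$ are not in $2\,\Cabh(\Q)$, since the relevant difference $-a^{2h}$ (resp. $-b^{2h}$) is negative and hence not a square; therefore every rational order-$4$ point doubles to $(0,0)$, and the halving formula shows these have $x$-coordinate $a^hb^h$ or $-a^hb^h$. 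The ones with $x=-a^hb^h<0$ are again not in $2\,\Cabh(\Q)$, so the whole question collapses to whether $P$ itself is divisible by~$2$. By the standard $2$-descent criterion for a curve with full rational $2$-torsion, $P\in 2\,\Cabh(\Q)$ exactly when $a^hb^h$, $a^h(a^h+b^h)$ and $b^h(a^h+b^h)$ are all squares; since their product is $y_P^2$, this is equivalent to $a^h$, $b^h$ and $a^h+b^h$ sharing a common squarefree part.

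Because $\C_{(\lambda a)^h,(\lambda b)^h}\cong\Cabh$ over~$\Q$ (via $x\mapsto\lambda^{2h}x$, $y\mapsto\lambda^{3h}y$), I may assume $\gcd(a,b)=1$; the common-squarefree-part condition then forces $a^h$, $b^h$ and $a^h+b^h$ to be perfect squares. For even~$h$ this last requirement reads $(a^{h/2})^2+(b^{h/2})^2=\Box$, i.e. $(a^{h/2},b^{h/2})$ is itself a \pp{} — which is exactly why the degenerate exponent $h=2$, where this says that $(a,b)$ is a \pp{}, produces the extra point of order~$8$ and the group $\tor 2 8$. To finish the cases $h\neq2$ I would argue as follows. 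If $h$ is odd, then $a^h,b^h$ being squares forces $a,b$ to be squares, say $a=s^2$, $b=t^2$, and $a^2+b^2=\Box$ becomes $s^4+t^4=\Box$, impossible for positive $s,t$ by Fermat's theorem on $x^4+y^4=z^2$. If $4\mid h$, then $a^h+b^h=(a^{h/4})^4+(b^{h/4})^4=\Box$ is excluded the same way.

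The residual case $h\equiv 2\pmod 4$ (so $h\ge 6$ and $h/2$ is odd) is where I expect the genuine difficulty. Here I would factor $a^h+b^h=(a^2+b^2)\,Q$ with $Q=\frac{(a^2)^{h/2}+(b^2)^{h/2}}{a^2+b^2}$, check that $\gcd(a^2+b^2,Q)\mid(h/2)$ so that $a^2+b^2=\Box$ forces $Q$ to be a square up to a bounded factor, and then rule out $Q=\Box$; for $h=6$ this is the classical impossibility of $a^4-a^2b^2+b^4=\Box$ for coprime $a,b$. The main obstacle is to carry out this last step uniformly in~$h$ rather than equation by equation: unlike the odd and $4\mid h$ cases, it is not subsumed by the single identity $x^4+y^4\neq z^2$ and appears to require a genuine Diophantine non-existence input for the cofactors~$Q$.
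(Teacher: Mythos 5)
Your reduction is sound and takes a genuinely different route from the paper. The paper proves the proposition by fitting $\Cabh$ into Kubert's parametrization for torsion $\tor 2 4$ (so that the torsion group is $\tor 2 4$ or $\tor 2 8$) and then delegates the exclusion of $\tor 2 8$ for $h\neq 2$ to the proof of \cite[Proposition~1]{JNT}; you instead do the exclusion in-house by $2$-descent. All of your steps up to the case analysis check out: the factorization $y^2=x(x+a^{2h})(x+b^{2h})$, the order-$4$ point $P=\bigl(a^hb^h,a^hb^h(a^h+b^h)\bigr)$ with $2P=(0,0)$, the Mazur/Lagrange step, the verification that every rational order-$4$ point has $x=\pm a^hb^h$ and that only $x=a^hb^h$ can lie in $2\,\Cabh(\Q)$, the descent criterion, and the scaling reduction to $\gcd(a,b)=1$, after which torsion $\tor 2 8$ is equivalent to $a^h$, $b^h$, $a^h+b^h$ all being perfect squares. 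Your treatment of odd $h$ (via $s^4+t^4\neq\Box$) and of $4\mid h$ is correct, and your criterion transparently explains why $h=2$ is the exceptional case.

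However, as you acknowledge, the case $h\equimod{2}{4}$ with $h\geq 6$ is a genuine gap: you have not ruled out $a^h+b^h=\Box$ there, and your proposed cofactor analysis is indeed not uniform in $h$ (also, the ``classical impossibility'' of $a^4-a^2b^2+b^4=\Box$ holds only under the side conditions $ab\neq 0$, $a\neq\pm b$ --- satisfied here, but worth stating --- and your gcd observation is unnecessary anyway, since $a^2+b^2=\Box$ directly forces the cofactor to be a square). The missing input exists and is already in the paper's toolkit: by \cite[Main\;Theorem\,2]{Darmon_Merel}, the equation $x^n+y^n=z^2$ has no solution in nonzero coprime integers for $n\geq 4$ --- exactly the statement the paper itself invokes in the proof of its main theorem to exclude $a^h+b^h=\Box$. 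Since you have already reduced to coprime positive $a,b$, citing this result disposes of \emph{all} even $h\geq 4$ at once (subsuming your separate $4\mid h$ Fermat argument). With that single citation your proof is complete, and it is arguably more self-contained than the paper's, which outsources the decisive step to the proof of \cite[Proposition~1]{JNT}.
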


\begin{proof}
We use Kubert's parametrization for elliptic curves with torsion group $\tor 2 4$ 
(see~\cite[p.\,217]{Kubert}): 
$$y^2+xy-ey\ =\ x^3-ex^2$$
for $$e=v^2-\tfrac 1{16}\qquad\text{where $v\neq 0,\;\pm\tfrac 14$}\,.$$
By means of a rational transformation, this parameterization can be brought into the form 
	$$y^2\ =\ x^3+\ta x^2+\tb x$$
	with $$\ta=2\cdot(16v^2+1)\quad
	\text{and}\quad\tb=(16v^2-1)^2.$$
	Now, for $v=\frac pq$, let $p:=a^h-b^h$ and $q:=\frac 14(a^h+b^h)$.
	Then the curve $y^2\ =\ x^3+\ta x^2+\tb x$ is equivalent to the curve 
	$$\Cabh:\hspace{3ex}
	y^2\ =\ x^3+(a^{2h} +  b^{2h})x^2+a^{2h}b^{2h}x.$$
	This shows that for any positive integer $h$, the torsion group of $\Cabh$ is $\tor 2 4$ or $\tor 2 8$. But, thanks to the proof of \cite[Proposition~1]{JNT}, for a given {\pp} $(a,b)$, the torsion subgroup of $\Cabh$ is isomorphic to $\tor 2 8$ only when $h=2$. This completes the proof.  
\end{proof}

\begin{thm}\label{thm:main28} 
The {\pp} $(a,b)$ is a {\hppp} if and only if the elliptic curve $\Cabh$ 
has positive rank over $\Q$.
\end{thm}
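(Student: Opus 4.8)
The plan is to translate the three-square definition of a \hppp into the existence of a rational point of infinite order on $\Cabh$, and to read off the converse from the group law. For a candidate pair $(k,l)$, put $t=k/l$: the requirements that $(k,l)$ and $(a^hk,b^hl)$ both be \pps become the two simultaneous conditions $t^2+1=\Box$ and $a^{2h}t^2+b^{2h}=\Box$. Dividing the second by $a^{2h}$ turns this into the classical problem of concordant forms for the parameter $N=(b/a)^{2h}$, whose curve $\xi(\xi+1)(\xi+N)=\Box$ becomes $\Cabh$ under the scaling $x=a^{2h}\xi$. Concretely I would attach to a witness $(k,l)$ the point
$$(x,y)=\left(\frac{a^{2h}k^2}{l^2},\ \frac{a^{2h}k\,c\,d}{l^3}\right)\in\Cabh(\Q),\qquad c^2=k^2+l^2,\quad d^2=a^{2h}k^2+b^{2h}l^2,$$
and verify by direct substitution that it lies on $\Cabh$. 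The key structural observation is that for this point all three quantities $x$, $x+a^{2h}$ and $x+b^{2h}$ are perfect squares, since they equal $(a^hk/l)^2$, $a^{2h}c^2/l^2$ and $d^2/l^2$.

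For the direction ``\hppp $\Rightarrow$ positive rank'' I would invoke the complete $2$-descent criterion: because $\Cabh$ has full rational $2$-torsion, a point with $y\neq 0$ lies in $2\,\Cabh(\Q)$ exactly when $x$, $x+a^{2h}$ and $x+b^{2h}$ are all squares. Hence the point produced from a witness lies in $2\,\Cabh(\Q)$. Since $k\neq 0$ forces $x\neq 0$, it differs from the only torsion points divisible by $2$, namely $O$ and $(0,0)$; indeed $2\,\Cabh(\Q)\cap\Cabh(\Q)_{\mathrm{tors}}=2\,\Cabh(\Q)_{\mathrm{tors}}=\{O,(0,0)\}$, using the preceding Proposition's identification of the torsion with $\tor 2 4$ and the fact that the four-torsion points double to $(0,0)$. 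A point of $2\,\Cabh(\Q)$ distinct from these two is therefore of infinite order, so $\Cabh$ has positive rank.

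For the converse I would run the correspondence backwards through the doubling map. If the rank is positive then $\Cabh(\Q)$ is infinite, and since doubling is finite-to-one the set $2\,\Cabh(\Q)$ is infinite as well. For $P=(x_1,y_1)$ the duplication formula on $y^2=x^3+(a^{2h}+b^{2h})x^2+a^{2h}b^{2h}x$ gives
$$x(2P)=\left(\frac{x_1^2-a^{2h}b^{2h}}{2y_1}\right)^2,$$
a square, and by the descent criterion $x(2P)+a^{2h}$ and $x(2P)+b^{2h}$ are squares too; writing $x(2P)=(a^hk/l)^2$ then recovers a \pp $(k,l)$ for which $(a^hk,b^hl)$ is a \pp. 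Letting $P$ range over infinitely many points yields infinitely many admissible pairs; discarding the finitely many that are degenerate ($x=0$) or that realise the single ratio $k/l=a/b$ (the multiples of $(a,b)$) leaves infinitely many genuine, pairwise non-proportional witnesses. This proves that $(a,b)$ is a \hppp and simultaneously establishes the announced side result.

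The main obstacle I anticipate is not a single hard estimate but the torsion bookkeeping: one must pin down precisely which rational points give \emph{admissible} witnesses, confirm that the image of doubling meets the torsion only in $\{O,(0,0)\}$, and check that the excluded pairs (degenerate, or proportional to $(a,b)$) correspond to only finitely many points so that infinitely many honest witnesses survive. The clean way to organise all of this is the complete $2$-descent criterion together with the duplication formula above, which is exactly why establishing the torsion group $\tor 2 4$ in the Proposition is the indispensable preliminary.
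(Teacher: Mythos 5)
Your proposal is correct, and while its overall skeleton matches the paper --- a dictionary between witness pairs $(k,l)$ and rational points on $\Cabh$ with square $x$-coordinate, plus the duplication formula $x([2]P)=\bigl((x_1^2-a^{2h}b^{2h})/(2y_1)\bigr)^2$ for the converse, exactly as in the paper's Lemmas~\ref{lem:main24} and~\ref{lem:final24} --- it replaces the decisive step by a genuinely different mechanism. Your witness point $\bigl(a^{2h}k^2/l^2,\,a^{2h}kcd/l^3\bigr)$ is the paper's point with $x_2=b^{2h}l^2/k^2$ translated by the $2$-torsion point $(0,0)$ (the two $x$-values multiply to $a^{2h}b^{2h}$), so that part is equivalent; the real divergence is in certifying infinite order. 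The paper enumerates the torsion $x$-coordinates $0,\,-a^{2h},\,-b^{2h},\,\pm a^hb^h$ and excludes the square candidates arithmetically: $x=0$ gives $l=0$; for odd $h$, $a^hb^h=\Box$ would force $ab=\Box$, impossible for a {\pp} via the rank-$0$ congruent-number curve; and for even $h\geq 4$, $x_2=a^hb^h$ would force $a^h+b^h=\Box$, ruled out by the deep theorem of Darmon--Merel. You instead note that the witness point satisfies the complete $2$-descent criterion ($x$, $x+a^{2h}$, $x+b^{2h}$ all squares, valid here since all $2$-torsion is rational), hence lies in $2\Cabh(\Q)$; since $2Q$ torsion implies $Q$ torsion, $2\Cabh(\Q)$ meets the torsion exactly in the doubled torsion $\{$point at infinity$,\,(0,0)\}$, and your point has $x\neq 0$ and $y=a^{2h}kcd/l^3\neq 0$, so it has infinite order. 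This buys uniformity in $h$ (no parity split) and pushes all the deep arithmetic into the Proposition's torsion computation: indeed, if the $4$-torsion points (with $x=\pm a^hb^h$, which double to $x=0$ since $x^2=a^{2h}b^{2h}$ --- a one-line check you should include) were halvable over $\Q$, there would be rational $8$-torsion, contradicting $\tor 2 4$; so in your argument the Fermat/Darmon--Merel content is absorbed into the already-established Proposition rather than invoked afresh. What the paper's route buys in exchange is the explicit list of torsion coordinates and the concrete obstructions ($ab\neq\Box$, $a^h+b^h\neq\Box$) that inform its examples and Algorithm. Your concordant-forms framing with $N=(b/a)^{2h}$ is also a correct and illuminating reformulation. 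The only points to write out in full are the routine ones you already flag: the statement of the descent criterion for points with $y\neq 0$, and the finite-to-one bookkeeping ensuring infinitely many pairwise non-proportional witnesses, both of which go through without difficulty.
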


The following Lemmas~\ref{lem:trans28}, \ref{lem:main24} and~\ref{lem:final24}
prepare the proof of {\THM}\;\ref{thm:main28}. First,  we transform the curve
$\Cabh$ to another curve on which we carry out our calculations. 

\begin{lem}\label{lem:trans28} 
If $(\bar x,\bar y)$, $\bar x\neq0$, is a point on the curve $\Cabh$, then the point $(\frac{a^hb^h}{\bar x},\frac{\bar y}{\bar x})$ is a point on
the curve $$y^2x\ =\ a^hb^h+(a^{2h}+b^{2h})x+a^hb^h x^2.$$ 
In particular, if $(\bar x,\bar y)$ is a rational point, then so is $(\frac{a^hb^h}{\bar x},\frac{\bar y}{\bar x})$.
\end{lem}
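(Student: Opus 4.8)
The statement is a direct algebraic verification, so the plan is simply to substitute the claimed point into the target curve and check the equation holds, using that the original point satisfies the equation of $\Cabh$. First I would start from the hypothesis that $(\bar x,\bar y)$ lies on $\Cabh$, i.e.\ $\bar y^2=\bar x^3+(a^{2h}+b^{2h})\bar x^2+a^{2h}b^{2h}\bar x$, and note that the proposed transformation $(\bar x,\bar y)\mapsto(\frac{a^hb^h}{\bar x},\frac{\bar y}{\bar x})$ is a rational map that is well-defined precisely because $\bar x\neq0$. This immediately settles the ``in particular'' clause about rationality: both new coordinates are rational functions of $\bar x,\bar y$ with nonvanishing denominator, hence rational whenever $(\bar x,\bar y)$ is.

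Next I would set $X=\frac{a^hb^h}{\bar x}$ and $Y=\frac{\bar y}{\bar x}$ and plug these into the target equation $Y^2X=a^hb^h+(a^{2h}+b^{2h})X+a^hb^hX^2$. The left-hand side becomes $\frac{\bar y^2}{\bar x^2}\cdot\frac{a^hb^h}{\bar x}=\frac{a^hb^h\,\bar y^2}{\bar x^3}$. Now I would use the defining relation for $\bar y^2$ to replace $\bar y^2$, obtaining
$$
\frac{a^hb^h}{\bar x^3}\bigl(\bar x^3+(a^{2h}+b^{2h})\bar x^2+a^{2h}b^{2h}\bar x\bigr)
= a^hb^h+\frac{a^hb^h(a^{2h}+b^{2h})}{\bar x}+\frac{a^{3h}b^{3h}}{\bar x^2}.
$$
Expanding the right-hand side with $X=\frac{a^hb^h}{\bar x}$ gives $a^hb^h+(a^{2h}+b^{2h})\frac{a^hb^h}{\bar x}+a^hb^h\cdot\frac{a^{2h}b^{2h}}{\bar x^2}$, which matches the left-hand side term by term. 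This completes the verification.

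I do not expect any genuine obstacle here: the lemma is a one-line identity once the substitution is carried out, and the transformation is essentially the standard $2$-isogeny/inversion $x\mapsto a^hb^h/x$ that maps $\Cabh$ (with its factored cubic $x(x+a^{2h})(x+b^{2h})$-type structure) onto the given cubic model. The only point requiring the mild hypothesis $\bar x\neq0$ is to keep the map defined, and the only care needed is bookkeeping in the algebra, which is routine. The purpose of the lemma in the larger argument is to move to a model $y^2x=a^hb^h+(a^{2h}+b^{2h})x+a^hb^hx^2$ that is more convenient for the subsequent rank computation, so I would keep the computation explicit enough that the reader can immediately reuse the inverse direction if needed.
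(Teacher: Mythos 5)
Your proof is correct and is essentially the same argument as the paper's: a direct verification that substituting $(X,Y)=\bigl(\frac{a^hb^h}{\bar x},\frac{\bar y}{\bar x}\bigr)$ into $y^2x=a^hb^h+(a^{2h}+b^{2h})x+a^hb^hx^2$ reduces, via the relation $\bar y^2=\bar x^3+(a^{2h}+b^{2h})\bar x^2+a^{2h}b^{2h}\bar x$, to a term-by-term identity. The paper merely packages the same computation through the projective point $(a^hb^h,\bar y,\bar x)$ on the homogenized cubic and then dehomogenizes, which is a cosmetic rather than substantive difference from your direct affine substitution.
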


\begin{proof}
If $(\bar x,\bar y)$ lies on $\Cabh$, then $(\bar X,\bar Y,\bar Z)=(a^hb^h, \bar y,\bar x)$ is a point 
on the projective curve
$$
XY^2 = a^hb^h Z^3+(a^{2h}+b^{2h})XZ^2 + a^hb^h X^2Z.
$$
Since $\bar x=\bar Z\neq 0$, the point $(\bar X,\bar Y,\bar Z)$ also satisfies the dehomogenized equation
$$
\frac{X}{Z}\Bigl(\frac{Y}{Z}\Bigr)^2 = a^hb^h +(a^{2h}+b^{2h})\frac XZ +a^hb^h\Bigl(\frac{X}{Z}\Bigr)^2.
$$
Hence, $(\frac{\bar X}{\bar Z}, \frac{\bar Y}{\bar Z}) = (\frac{a^hb^h}{\bar x}, \frac{\bar y}{\bar x})$ is a point on the affine curve
$$y^2x\ =\ a^hb^h+(a^{2h}+b^{2h})x+a^hb^h x^2,$$
as claimed.
\end{proof}

Let $x_0=\frac{p}{q}$ be the $x$-coordinate of a rational point on
$y^2x=a^hb^h+(a^{2h}+b^{2h})x+a^hb^h x^2$, where $q=\tq^2$ and 
$p=a^hb^h\cdot\tp^2$ for some integers $\tq,\tp$. Then $$a^hb^h\cdot y^2\cdot\frac pq\;=\;
a^hb^h\cdot y^2\cdot\frac{a^hb^h\cdot \tp^2}{\tq^2}\;=\;
y^2\cdot\biggl(\frac{a^hb^h\cdot\tp}{\tq}\biggr)^2\;=\;\Box\,.$$
Therefore, $$a^hb^h\cdot\biggl(a^hb^h+(a^{2h}+b^{2h})\cdot\frac pq+a^hb^h\cdot\frac{p^2}{q^2}
\biggr)\;=\;\Box\,,$$
and by clearing square denominators we obtain
$$a^hb^h\cdot\bigl(a^hq+b^hp\bigr)\cdot\bigl(a^hp+b^hq\bigr)\;=\;\Box\,,$$
which is surely the case when
\begin{equation}
	a^h\cdot(a^hq+b^hp)\;=\;\Box\qquad\text{and}\qquad
	b^h\cdot(a^hp+b^hq)\;=\;\Box\,.\label{eq:24}  
\end{equation}
We will now use the group structure on elliptic curves to add points. 
In particular we will write $[2]P$ to denote the point $P+P$.
\begin{lem}\label{lem:main24}
	Let $P=(x_1,y_1)$ be a rational point on $\Cabh$ and let~$x_2$ be
	the $x$-coordinate of the point $[2]P$. 
	Then, for $x_0:=\frac{a^hb^h}{x_2}=\frac{p}{q}$ we have
	$q=\tq^2$ and $p=a^hb^h\cdot\tp^2$ for some integers $\tq,\tp$, 
	and $p$ and $q$ satisfy~\eqref{eq:24}.
\end{lem}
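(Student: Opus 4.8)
The plan is to exploit the fact that the cubic defining $\Cabh$ splits completely over $\Q$, namely
$$\Cabh:\quad y^2\ =\ x\,(x+a^{2h})\,(x+b^{2h}),$$
so that $(0,0)$, $(-a^{2h},0)$ and $(-b^{2h},0)$ are the three nontrivial $2$-torsion points. For such a curve a point lies in the image of the doubling map precisely when each of $x$, $x+a^{2h}$ and $x+b^{2h}$ is a rational square; structurally this is the content of complete $2$-descent, i.e. of the homomorphism $\Cabh(\Q)\to(\Q^\ast/\Q^{\ast2})^3$, $(x,y)\mapsto(x,\,x+a^{2h},\,x+b^{2h})$, whose kernel is exactly $2\,\Cabh(\Q)$. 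Rather than invoke this abstractly, I would make it explicit through the tangent-line duplication formula, which simultaneously produces $x_2$ and exhibits the three squares.

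Concretely, writing $P=(x_1,y_1)$ with $y_1\neq0$ (so that $[2]P$ is an affine point) and applying the doubling formula to $y^2=x^3+(a^{2h}+b^{2h})x^2+a^{2h}b^{2h}x$, a short computation reduces the numerator $f'(x_1)^2-4(a^{2h}+b^{2h}+2x_1)\,y_1^2$ to $(x_1^2-a^{2h}b^{2h})^2$, giving
$$x_2\ =\ \Bigl(\frac{x_1^2-a^{2h}b^{2h}}{2y_1}\Bigr)^{2}.$$
Shifting by the two roots and using $y_1^2=x_1(x_1+a^{2h})(x_1+b^{2h})$, the numerators of $x_2+a^{2h}$ and $x_2+b^{2h}$ turn out to be perfect squares as well, namely
$$x_2+a^{2h}=\Bigl(\frac{x_1^2+2a^{2h}x_1+a^{2h}b^{2h}}{2y_1}\Bigr)^{2},\qquad x_2+b^{2h}=\Bigl(\frac{x_1^2+2b^{2h}x_1+a^{2h}b^{2h}}{2y_1}\Bigr)^{2}.$$
This is the one step that needs real care: one must guess the square completions (each numerator factors as the square of a monic quadratic in $x_1$, symmetric under $a\leftrightarrow b$), after which the identities are routine polynomial checks. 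I expect this to be the main obstacle; everything else is bookkeeping.

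From $x_2=\Box$ I would then read off the asserted shape of $x_0$. Since $x_2$ is the square of a rational and is nonzero (which holds as soon as $[2]P\neq(0,0)$, e.g. when $P$ has infinite order), write $\sqrt{x_2}=\tq/\tp$ in lowest terms, so $x_2=\tq^2/\tp^2$ and hence $x_0=\frac{a^hb^h}{x_2}=\frac{a^hb^h\tp^2}{\tq^2}=\frac pq$ with $p=a^hb^h\tp^2$ and $q=\tq^2$, exactly the required form. Finally I would verify \eqref{eq:24} by direct substitution:
$$a^h\bigl(a^hq+b^hp\bigr)=a^{2h}\bigl(\tq^2+b^{2h}\tp^2\bigr)=a^{2h}\tp^2\,(x_2+b^{2h}),\qquad b^h\bigl(a^hp+b^hq\bigr)=b^{2h}\bigl(\tq^2+a^{2h}\tp^2\bigr)=b^{2h}\tp^2\,(x_2+a^{2h}),$$
and each right-hand side is a rational square because $x_2+a^{2h}$ and $x_2+b^{2h}$ are squares by the previous step. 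Thus the proof splits cleanly into an explicit duplication computation (the hard part, essentially spotting the square factorizations) followed by a transparent substitution delivering \eqref{eq:24}.
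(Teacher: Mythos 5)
Your proposal is correct and takes essentially the same route as the paper: both proofs rest on the duplication formula $x_2=\bigl(\frac{x_1^2-a^{2h}b^{2h}}{2y_1}\bigr)^2$ (the paper cites Silverman--Tate for it) and verify \eqref{eq:24} via the same square completions, since your identities for $x_2+a^{2h}$ and $x_2+b^{2h}$ are exactly the paper's displayed squares $a^{2h}\bigl(x_1^2+2b^{2h}x_1+a^{2h}b^{2h}\bigr)^2$ and $b^{2h}\bigl(x_1^2+2a^{2h}x_1+a^{2h}b^{2h}\bigr)^2$ after clearing the factor $\tp^2=(2y_1)^2$ (your computation in fact corrects a typo in the paper's display, which drops the $x_1$ from the middle terms). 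Your complete-$2$-descent framing and the lowest-terms extraction of the integers $\tp,\tq$ from the rational square $x_2$ are only cosmetic refinements of the same argument.
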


\begin{proof}
	By Silverman and Tate~\cite[p.~27]{SilvermanTate} we have
	$$x_2=\frac{(x_1^2-B)^2}{(2y_1)^2}\qquad\text{for }B:=a^{2h}b^{2h},$$
	and therefore
	$$x_0\;=\;\frac{a^hb^h}{x_2}\;=\;
	\frac{a^hb^h\bigl(4x_1^3+4 Ax_1^2+4 B x_1\bigr)}{(x_1^2-B)^2}\;=\;
	\frac{p}{q}\qquad\text{for }A:=a^{2h}+b^{2h}.$$
	So, $q=\Box$ and $p=a^hb^h\cdot\tp^2$ for some integer $\tp$.
	
	Now, for $p$ and $q$ (with $a=m^2-n^2$ and $b=2mn$) we obtain
	$$
	a^h\cdot(a^hq+b^hp)\;=\; a^{2h}\bigl(x_1^2+2b^{2h}+B\bigr)^2\;=\;\Box
	$$
	and
	$$
	b^h\cdot(a^hp+b^hq)\;=\; b^{2h}\bigl(x_1^2+2a^{2h}+B\bigr)^2\;=\;\Box
	$$
	which completes the proof.
\end{proof}

The next result gives a relation between rational points on $\Cabh$ 
with square $x$-coordinates and {\pps} $(k,l)$ such that $(a^h k,b^h l)$ is a {\pp}.

\begin{lem}\label{lem:final24} Let $(a,b)$ be a {\pp}. 
Then every {\pp} $(k,l)$ such that $(a^h k,b^h l)$  
	is a {\pp} corresponds to a rational point on $\Cabh$ 
	whose $x$-coordinate is a square,
	and vice versa.
\end{lem}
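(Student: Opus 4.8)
The plan is to write down explicit, mutually inverse maps between the two sets and to check the defining square conditions by exploiting the factored form $y^2=x(x+a^{2h})(x+b^{2h})$ of $\Cabh$.

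For the forward direction I would start from a \pp\ $(k,l)$ with $(a^hk,b^hl)$ again a \pp, so that $k^2+l^2=\Box$, say $k^2+l^2=s^2$, and $a^{2h}k^2+b^{2h}l^2=\Box$, say $a^{2h}k^2+b^{2h}l^2=t^2$. I would then propose the point whose $x$-coordinate is $x_1=\tfrac{b^{2h}l^2}{k^2}$, which is visibly a square. Substituting into the factored equation gives $x_1+a^{2h}=\tfrac{t^2}{k^2}$ and $x_1+b^{2h}=\tfrac{b^{2h}s^2}{k^2}$, whence $x_1(x_1+a^{2h})(x_1+b^{2h})=\bigl(\tfrac{b^{2h}lst}{k^3}\bigr)^2$; thus $\bigl(x_1,\tfrac{b^{2h}lst}{k^3}\bigr)$ is a rational point on $\Cabh$ with square $x$-coordinate (clearing denominators and rescaling $(k,l)$ is harmless, as \pps are considered up to multiples). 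Notice that in fact all three of $x_1$, $x_1+a^{2h}$ and $x_1+b^{2h}$ turn out to be squares.

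For the converse I would take a rational point $(\bar x,\bar y)$ on $\Cabh$ with $\bar x=w^2\neq0$ a square and read off $(k,l)\propto(b^h,w)$ (the inverse of the map above, since $w=b^hl/k$). Passing through the curve of \LEM~\ref{lem:trans28}, the image $x$-coordinate is $x_0=\tfrac{a^hb^h}{w^2}=\tfrac pq$ with $q=w^2=\Box$ and $p=a^hb^h$, exactly the shape required for the computation preceding \eqref{eq:24}; evaluating \eqref{eq:24} at these values turns the two conditions into $w^2+b^{2h}=\Box$ and $w^2+a^{2h}=\Box$, which are precisely $k^2+l^2=\Box$ and $a^{2h}k^2+b^{2h}l^2=\Box$ for $(k,l)=(b^h,w)$. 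I would then verify that this construction genuinely inverts the forward map up to the scaling equivalence of \pps.

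The hard part will be the converse. Lying on $\Cabh$ with $\bar x=w^2$ guarantees only that the product $(w^2+a^{2h})(w^2+b^{2h})$ is a square, i.e.\ that $w^2+a^{2h}$ and $w^2+b^{2h}$ share a common square class; producing a genuine \pp\ $(k,l)$ needs this class to be trivial, so that each factor is separately a square as demanded by \eqref{eq:24}. Securing this splitting for the points in question is the real obstacle: it is what the transformation of \LEM~\ref{lem:trans28} together with the passage to \eqref{eq:24} is designed to provide, and for the doubled points of \LEM~\ref{lem:main24} it is guaranteed outright. I would finally be careful with the square denominators and the harmless rescaling of $(k,l)$, since that normalization is the most likely source of sign or factor errors.
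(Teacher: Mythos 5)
Your forward direction (pair $\to$ point) is correct and in fact more detailed than the paper's, which simply asserts that $x_2=\frac{b^{2h}l^2}{k^2}$ is the $x$-coordinate of a rational point on $\Cabh$; your verification $x_1(x_1+a^{2h})(x_1+b^{2h})=\bigl(\tfrac{b^{2h}lst}{k^3}\bigr)^2$ is exactly what is needed, and your side remark that all three factors $x_1=(b^hl/k)^2$, $x_1+a^{2h}=(t/k)^2$, $x_1+b^{2h}=(b^hs/k)^2$ are individually squares is more than decoration: by the standard descent criterion on a curve with full rational $2$-torsion, it shows your constructed point lies in $2\,\Cabh(\Q)$ (modulo torsion). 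Your converse also picks the right pair: $(k,l)\propto(b^hf,g)$ is proportional to the paper's $(r^2-s^2,2rs)$, since the paper's parametrization with $t=r/s$ gives $\frac gf=\frac{2b^hrs}{r^2-s^2}=\frac{b^hl}{k}$; the $t$-manipulation there serves only to put the pair into primitive Pythagorean form, so your shortcut is legitimate, and the two conditions you isolate, $w^2+b^{2h}=\Box$ and $w^2+a^{2h}=\Box$, are the same ones the paper extracts from~\eqref{eq:24}.

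However, the step you yourself flag as ``the real obstacle'' is a genuine gap, and you leave it open. The curve equation at $\bar x=w^2$ yields only $(w^2+a^{2h})(w^2+b^{2h})=\Box$, and neither {\LEM}\;\ref{lem:trans28} nor the computation preceding~\eqref{eq:24} splits this product into two separate squares --- that computation merely records what would follow \emph{if} \eqref{eq:24} held. The splitting is supplied solely by {\LEM}\;\ref{lem:main24}, whose proof uses the duplication formula: when $x_2$ is the $x$-coordinate of $[2]P$, the quantities $a^h(a^hq+b^hp)$ and $b^h(a^hp+b^hq)$ are exhibited as explicit squares $a^{2h}(\cdot)^2$ and $b^{2h}(\cdot)^2$. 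So to complete your converse you must restrict to doubled points and cite that lemma --- which is in effect what the paper does: its own proof of {\LEM}\;\ref{lem:final24} also invokes {\LEM}\;\ref{lem:main24} for a point with square $x$-coordinate, even though a point with merely square $x$ need not a priori lie in $2\,\Cabh(\Q)$ (the classes of $x+a^{2h}$ and $x+b^{2h}$ could be equal and nontrivial). The restriction costs nothing: by your own observation the forward map lands in $2\,\Cabh(\Q)$, and in the proof of {\THM}\;\ref{thm:main28} and its corollary the point-to-pair direction is only ever applied to points of the form $[2i]P$. With the converse so restricted and {\LEM}\;\ref{lem:main24} invoked to provide the two squares, your argument coincides with the paper's.
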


\begin{proof}
	Let $x_2=g^2/f^2$ be the $x$-coordinate of a rational point on
	$\Cabh$ for some rational values $f$ and $g$. 
	Then, by {\LEM}\;\ref{lem:main24}, 
	$\frac{a^hb^h}{x_2}=\frac{a^hb^h\cdot f^2}{g^2}$, 
	where $p=a^hb^h\cdot f^2$ and $q=g^2$ satisfy~\eqref{eq:24}, {\ie},
	$a^{2h}g^2+a^{2h}b^{2h}f^2=\Box$ and $b^{2h}g^2+a^{2h}b^{2h}f^2=\Box$. So, $\bigl(\frac{g}{f}\bigr)^2+
	b^{2h}=\rho^2$ for some $\rho\in\Q$ and $\bigl(\frac{g}{f}\bigr)^2+
	a^{2h}=\Box$. Let $\frac gf=\frac{2\rho t}{t^2+1}$ and 
	$b^h=\frac{\rho (t^2-1)}{t^2+1}$. Then $\rho=\frac{b^h(t^2+1)}{t^2-1}$ and
	$\frac gf=\frac{2 b^h t}{t^{2}-1}$, which gives us
	$$t=\frac{b^hf\pm\sqrt{g^2+b^{2h}f^2}}{g}\,.$$
	Since $$g^2+b^{2h}f^2\;=\;q+\frac{b^{2h}p}{a^hb^h}\;=\;q+\frac{b^hp}{a^h},$$ 
	by multiplying by $a^{2h}$ we get $$a^{2h}\cdot(g^2+b^{2h}f^2)\;=\;
	a^{2h}\cdot q+a^hb^h\cdot p\;=\;a^h(a^hq+b^hp).$$ Hence, by {\LEM}\;\ref{lem:main24},
	$g^2+b^{2h}f^2=\Box$ and therefore $t$ is rational, say $t=\frac rs$.
	Finally, since $\bigl(\frac{g}{f}\bigr)^2+a^{2h}=\Box$, we obtain
	$$a^{2h}\cdot(r^2-s^2)^2+b^{2h}\cdot(2rs)^2\;=\;\Box,$$ and for $k:=r^2-s^2$, $l:=2rs$, we finally
	get $$(a^{h} k)^2+(b^{h} l)^2\;=\;\Box\qquad\text{where $k^2+l^2=\Box$},$$
	which shows that $(a,b)$ is a {\hppp}.
	
	Assume now that we find a {\pp} $(k,l)$ such that $(a^h k,b^h l)$ is a {\pp}. 
	Without loss of generality we may assume that $k$ and $l$ are relatively prime.
	Thus, we find relatively prime positive integers $r$ and $s$ such that 
	$k=r^2-s^2$ and $l=2rs$. With $r,s,a,b$ we can compute 
	$x_2=\frac{b^{2h}l^2}{k^2}$, which is the $x$-coordinate of
	a rational point on $\Cabh$ whose $x$-coordinate 
	is obviously a square. 
\end{proof}

We are now ready for the proof of the main theorem.

\begin{proof}[Proof of Theorem\;\ref{thm:main28}]
	For every rational point $P$ on $\Cab$ with square $x$-coordinate
	let $(k_P,l_P)$ be the corresponding {\pp}. 
	By {\LEM}\;\ref{lem:final24} it is enough to show that no rational point
	with square $x$-coordinate has finite order.
	Notice that if $P$ is a point of infinite order, then for every
	integer $i$, $[2i]P$ is a rational point on $\Cabh$ with
	square $x$-coordinate, and not all of the corresponding {\pps}
	$(k_{[2i]P},l_{[2i]P})$ can be multiples of $(a,b)$.
	
	Let us consider the $x$-coordinates of the torsion points on 
	the curve $\Cabh$ with $h\geq 3$. For simplicity, we consider the $8$
	torsion points on the equivalent curve 
	$$y^2=\frac{a^hb^h}{x}+(a^{2h}+b^{2h})+a^hb^h x.$$
	The two torsion points at infinity are $(0,1,0)$ (which is the neutral
	element of the group) and $(1,0,0)$ (which is a point of order~$2$). 
	The other two points of order~$2$ are $(-\frac{a^h}{b^h},0)$ and
	$(-\frac{b^h}{a^h},0)$, and the four points of 
	order~$4$, which correspond to the four points 
	on the curve where the tangent to the curve
	is parallel to the $x$-axis, are
	$\bigl(1,\pm(a^h+b^h)\bigr)$ and $\bigl(-1,\pm(a^h-b^h)\bigr)$. 
	The corresponding points on the curve $\Cabh$
    are the three points 
    $(0,0)$, $(-b^{2h},0)$ and $(-a^{2h},0)$ of order~$2$,
    and the four points $\bigl(a^hb^h, \pm a^hb^h(a^h+b^h)\bigr)$ 
    and $\bigl(-a^hb^h, \pm a^hb^h(a^h-b^h)\bigr)$ of order~$4$.  
	Since $x_2$ is a square, we have that none of the values 
	$$ 0,\qquad -a^{2h},\qquad -b^{2h},\qquad
	a^{h}b^{h},\qquad -a^{h}b^{h},$$ is a rational square except 
	$0$ and possibly $a^hb^h$. 
	If $x_2=0$, then this implies $l=0$, but
	$(k,0)$ is not a {\pp}. 
	If $h$ is odd, $a^hb^h$ 
	cannot be a square unless $ab=\Box$, but it is impossible because   
    it is equivalent to the rank~$0$ congruent number 
    elliptic curve $y^2=x^3-x$ (also see~\cite[p.\;175]{Frenicle29}). 
    Consider now the case when $h$ is even. For the case
    $h=2$ see \cite{JNT}. If $h\geq 4$, 
    then by some algebra we obtain that $l^2=a^h b^{-h} k^2$, and therefore
    $(a^h k)^2+(b^h l)^2=a^h k^2(a^h + b^h)$. Thus, if 
    $(a^h k)^2+(b^h l)^2=\Box$, then also $$a^h + b^h=\Box\,.$$
    However, since $h\geq 4$, 
    by \cite[Main\;Theorem\,2.]{Darmon_Merel}
    this is impossible. 
    Thus, there is no {\pp} $(k,l)$ such that $(a^hk,b^hl)$ is a {\pp}.
\end{proof}

\begin{cor}
If $(a,b)$ is a {\hppp}, then there are 
infinitely many {\pps} $(k,l)$, not multiples of each other, such that
$(a^hk,b^hl)$ is a {\pp}.
\end{cor}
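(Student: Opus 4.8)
The plan is to combine the positive-rank criterion of Theorem~\ref{thm:main28} with the correspondence of Lemma~\ref{lem:final24}, and then to extract an infinite supply of pairwise non-proportional pairs from the multiples of a single point of infinite order. Since $(a,b)$ is a {\hppp}, Theorem~\ref{thm:main28} guarantees that $\Cabh$ has positive rank over $\Q$, so we may fix a rational point $Q$ of infinite order.

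First I would record that every even multiple of $Q$ enters the correspondence of Lemma~\ref{lem:final24}. Writing $[2i]Q=[2]([i]Q)$ and applying Lemma~\ref{lem:main24} to the point $P=[i]Q$, the quantity $x_0=a^hb^h/x([2i]Q)$ equals $p/q$ with $q=\tq^2$ and $p=a^hb^h\tp^2$; hence $x([2i]Q)=a^hb^h\,q/p=(\tq/\tp)^2$ is a rational square for every $i\geq 1$. By Lemma~\ref{lem:final24} each point $[2i]Q$ therefore determines a {\pp} $(k_i,l_i)$ for which $(a^hk_i,b^hl_i)$ is again a {\pp}.

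Next I would show that these pairs contain infinitely many that are pairwise non-proportional. Tracing the reverse construction of Lemma~\ref{lem:final24}, the square $x$-coordinate attached to a pair $(k,l)$ is $b^{2h}l^2/k^2$, so it depends only on the ratio $l/k$; consequently two pairs with different associated $x$-coordinates cannot be multiples of one another. It thus suffices to produce infinitely many distinct values among the $x([2i]Q)$. This is where the infinite order of $Q$ is used: two points of an elliptic curve share an $x$-coordinate only if they are equal or mutually inverse, and $[2i]Q=-[2j]Q$ would force $[2(i+j)]Q=\nO$, which is impossible for $i,j\geq 1$ since $Q$ has infinite order. Hence the points $[2i]Q$, $i\geq 1$, have pairwise distinct $x$-coordinates, and the corresponding pairs $(k_i,l_i)$ are pairwise non-proportional.

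The only genuine obstacle is this last injectivity argument---verifying that distinct doubled points really do yield pairs that are not multiples of each other---which I have reduced to the two clean facts that the associated $x$-coordinate is a function of the ratio $l/k$ alone and that no two distinct even multiples of an infinite-order point can be inverses. I would also briefly confirm that each $(k_i,l_i)$ is a genuine, nondegenerate {\pp}: since $x([2i]Q)\neq 0$ is finite, the construction of Lemma~\ref{lem:final24} forces $k_i\neq 0$ and $l_i\neq 0$ and so does not collapse. With these in hand, the existence of infinitely many {\pps} $(k,l)$, no two multiples of each other and all satisfying that $(a^hk,b^hl)$ is a {\pp}, is immediate.
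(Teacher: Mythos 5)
Your proof is correct and takes essentially the same route as the paper: fix a point of infinite order on $\Cabh$ via Theorem~\ref{thm:main28}, pass to its even multiples, and extract the corresponding pairs through Lemmas~\ref{lem:main24} and~\ref{lem:final24}. If anything, you make rigorous what the paper only asserts in passing (that each pair can be a multiple of just finitely many others), by noting that the associated $x$-coordinate $b^{2h}l^2/k^2$ depends only on the ratio $l/k$ and that distinct even multiples of an infinite-order point can neither coincide nor be inverses, hence have pairwise distinct $x$-coordinates.
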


\begin{proof}
By {\THM}\;\ref{thm:main28}, there exists a point~$P$
on $\Cabh$ of infinite order. Now, for every
integer $i$, $[2i]P$ is a rational point on $\Cabh$ with
square $x$-coordinate, and each of the corresponding {\pps}
$(k_{[2i]P},l_{[2i]P})$ can be a multiple of just finitely many other
such {\pp}. Thus, there are infinitely many integers~$j$,
such that the {\pps} $(k_{[2j]P},l_{[2j]P})$ are not 
multiples of each other.
\end{proof}

\begin{alg}\label{alg1}{\algsty
		The following algorithm describes how to construct {\pps} $(k,l)$ from rational
		points on $\Cabh$ of infinite order.
		\begin{itemize}
			\item Let $P$ be a rational point on $\Cabh$ of infinite order and
			let $x_2$ be the $x$-coordinate of $[2]P$.
			\item Let $f$ and $g$ be relatively prime positive integers such
			that $$\frac{g}{f}\;=\;\sqrt{x_2}.$$
			\item Let $r$ and $s$ be relatively prime positive integers such
			that $$\frac rs\;=\;\frac{b^hf+\sqrt{g^2+b^{2h}f^2}}{g}.$$
			\item Let $k:=r^2-s^2$ and let $l:=2rs$.
		\end{itemize}
		Then $(a^hk,b^hl)$ is a {\pp}. 
	}
\end{alg}

\noindent {\bf Examples.} For $m=2$ and $n=1$, let $a=m^2-n^2$ and $b=2mn$. 
Then $(a,b)=(3,4)$ is a {\pp} and we have:
\begin{enumerate}
	\item For $h=1, 2, 5, 7, 10$, the rank of\/ $\Cabh$ is $0$. Hence, $(3,4)$ is not a {\hppp} for these $h$'s.
	\item The curve\/ $\C_{a^3,b^3}$, with torsion group $\tor 2 4$, has rank~$1$ with generator
	$P=(-3888, 50544)$. 
	The $x$-coordinate of $[2]P$ is $120^2$ which leads to 
	$(k,l)=(8, 15)$ with
	$$(3^3\cdot 8)^2+(4^3\cdot 15)^2={984}^2.$$
	\item The curve $\C_{a^4,b^4}$, with torsion group $\tor 2 4$, has rank~$1$ with generator
	$P=(-11616, 1779360)$. 
	The $x$-coordinate of $[2]P$ is $\left(\frac{912}{11}\right)^2$ which leads to 
	$(k,l)=(176, 57)$ with
	$$(3^4\cdot 176)^2+(4^4\cdot 57)^2={20400}^2.$$
	\item The curve $\C_{a^6,b^6}$, with torsion group $\tor 2 4$, has rank~$1$ with generator
	$P=\left(\frac{46022656}{9}, -\frac{678725632000}{27}\right)$. 
	The $x$-coordinate of $[2]P$ is $\left(\frac{3542528}{10335}\right)^2$ which leads to 
	$(k,l)=(82680, 6919)$ with
	$$(3^6\cdot 82680)^2+(4^6\cdot 6919)^2={66603976}^2.$$
	\item The curve $\C_{a^8,b^8}$, with torsion group $\tor 2 4$, has rank~$1$ with generator
	\begin{multline*}
	P=\bigl({7708125201644979550524801024}/{4449714580590446281}, \\  {1277921705702061766209671471345189388288000}\\
	/{9386382158955136069419053179}\bigr).
	\end{multline*}
	The $x$-coordinate of $[2]P$ is 
	\begin{multline*}
	$$\bigl({53440130127350994946668083276381874419712}\\
	/{5167588869543442260000066303720001735}\big)^2$$
	\end{multline*}
	which leads to 
	$$
	\begin{aligned}
	k&=165362843825390152320002121719040055520, \\
	l&=26093813538745603001302775037295837119
	\end{aligned}
	$$
    with
	\begin{multline*}
	(3^8\cdot 165362843825390152320002121719040055520)^2\\
		+(4^8\cdot 26093813538745603001302775037295837119)^2\\
	 ={2025214764653997025456624774452736238320416}^2.
	\end{multline*}
	\item  The curve $\C_{a^9,b^9}$, with torsion group $\tor 2 4$, has rank~$2$ with generators
	\begin{multline*}
	P=\bigl({25081364886334831007139600}/{1571551568609929201}, \\ {41443059164404768152156856423653413183280}\\
	/{1970121246732012270673366199}\bigr), 
	\end{multline*}
	and
	$$P'=\bigl({37016224137216}/{3481}, -{626321243401613475840}/{205379}\bigr).$$
 	The $x$-coordinate of $[2]P$ is
 	\begin{multline*}
 	\bigl({893146963381147972449638465746558838722398648}\\
 	/{1411411003643292904433293132813004363405}\bigr)^2
 	\end{multline*} 
 	which leads to 
	$$
	\begin{aligned}
		k&= 46249115767383421892470149376016526980055040, \\
		l&= 111643370422643496556204808218319854840299831
	\end{aligned}
	$$
	with
		\begin{multline*}
	(3^9\cdot 46249115767383421892470149376016526980055040)^2\\
	+(4^9\cdot 111643370422643496556204808218319854840299831)^2\\
	={29280793774283640248199258896077312831993114558464}^2,
    \end{multline*}
	and $x$-coordinate of $[2]P'$ is 
	$\left(\frac{33879841085325312}{2390157690995}\right)^2$ which leads to 
	$$(k',l')=(2390157690995, 129241337148)$$ with
	$$(3^9\cdot 2390157690995)^2+(4^9\cdot 129241337148)^2={57975169167761913}^2.$$
	Of course, we can also start with any other 
	rational point on $\C_{3^9,4^9}$, {\eg}, we can start with the point $Q=P+P'$. 
	The $x$-coordinate of $[2]Q$ is 
	\begin{multline*}
	\bigl({535606775034572770422692764010359062528}\\
	/
	{101246892970078905163938616171330325}\bigr)^2
	\end{multline*}
	which leads to 
	$$
	\begin{aligned}
	k&=809975143760631241311508929370642600, \\
	l&=16345421601396874097372215698558321
	\end{aligned}
	$$ with
	\begin{multline*}
	3^9\cdot 809975143760631241311508929370642600)^2\\
	+(4^9\cdot 16345421601396874097372215698558321)^2\\
	= 16508511692072764149750761383248338944776^2.
	\end{multline*}
\end{enumerate}

\begin{cor}
	Let $(a,b)$ is a {\pp} with $a=m^2-n^2$ and $b=2mn$ such that either $5m^2-n^2=\Box$ or $m^2+3mn+n^2=\Box\,$. Then $(a,b)$ is a {\dppp}.
\end{cor}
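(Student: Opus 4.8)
The plan is to reduce the statement to a rank computation via \THM~\ref{thm:main28}. Specializing that theorem to $h=1$, the pair $(a,b)$ is a \dppp{} exactly when $\Cab\colon y^2=x(x+a^2)(x+b^2)$ has positive rank over $\Q$, so it suffices to exhibit a single rational point of infinite order. By \LEM~\ref{lem:final24} (with $h=1$) the rational points with square $x$-coordinate correspond to \pps{} $(k,l)$ with $(ak,bl)$ a \pp{}; concretely, such a point with $x$-coordinate $x=u^2$ exists as soon as there is a rational $u$ with $u^2+a^2=\Box$ and $u^2+b^2=\Box$, the associated point being $\bigl(u^2,\,u\sqrt{(u^2+a^2)(u^2+b^2)}\bigr)$. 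Geometrically this asks for a rational box with edges $u,a,b$: the diagonal of the $a,b$-face is the rational number $m^2+n^2$, and I must supply a third edge $u$ so that the remaining two face diagonals $\sqrt{u^2+a^2}$ and $\sqrt{u^2+b^2}$ are rational as well.

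Before constructing $u$ I would settle non-degeneracy, since this is what turns ``a rational point'' into ``a rational point of infinite order''. From the torsion analysis in the proof of \THM~\ref{thm:main28} (valid verbatim for $h=1$, where the torsion group is $\tor 2 4$), the $x$-coordinates of the eight torsion points are $0,-a^2,-b^2,ab,-ab$ together with the point at infinity. A point with $x=u^2\ge 0$ can therefore be torsion only if $u=0$ or $u^2=ab$. Hence any rational $u\neq0$ with $u^2\neq ab$, $u^2+a^2=\Box$ and $u^2+b^2=\Box$ produces a point of infinite order, and the \pp{} $(k,l)$ supplied by \LEM~\ref{lem:final24} is then not a multiple of $(a,b)$; should a degenerate coincidence $u^2=ab$ occur I would instead pass to $[2]P$, exactly as in the proof of the Corollary following \THM~\ref{thm:main28}.

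It remains to produce $u$ from each hypothesis, and this is the technical heart. The unifying feature is the number $5$: the first hypothesis is $5m^2-n^2=\Box$, while the second is equivalent, after multiplying by $4$, to $(2m+3n)^2-5n^2=\Box$; both express that a binary quadratic form whose discriminant is a multiple of $5$ represents a square, which is the arithmetic attached to the Gaussian integer $1+2i$ of norm $5$ (equivalently, to the $(3,4,5)$ triangle, since $(1+2i)^2=-3+4i$). I would parametrize each conic rationally --- for instance $5m^2-n^2=w^2$ is solved by $(m:n:w)=(p^2+q^2:2(p^2+pq-q^2):p^2-4pq-q^2)$ --- and then write $u$ as an explicit rational function of $m,n$ and the given square root $w$. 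With $u$ in hand the two conditions $u^2+a^2=\Box$ and $u^2+b^2=\Box$ are checked by a direct (if lengthy) substitution in which the radicand collapses to a perfect square precisely because $w^2=5m^2-n^2$ (resp.\ $w^2=m^2+3mn+n^2$). Finally I would verify $u\neq0$ and $u^2\neq ab$ on the parametrized family and invoke \THM~\ref{thm:main28} to conclude that $(a,b)$ is a \dppp{}.

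The step I expect to be the real obstacle is exactly this explicit construction of $u$, together with the verification that the radicand factors as a square times the hypothesised form. The naive candidates --- fixed small \pps{} $(k,l)$, or edges $u$ of small height --- only reproduce the torsion points or lead to quartic rather than conic conditions, so the correct $u$ must genuinely involve $\sqrt{5m^2-n^2}$ (resp.\ $\sqrt{m^2+3mn+n^2}$). Pinning down the right rational expression, and confirming that it never lands on $u^2=ab$ nor yields a \pp{} that is a multiple of $(a,b)$, is where the computation has to be carried out with care.
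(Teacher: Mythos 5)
Your framework is sound, but the proof is missing its central step, and you say so yourself: you never actually construct the rational number $u$ with $u^2+a^2=\Box$ and $u^2+b^2=\Box$, deferring it as ``the technical heart'' to an unspecified parametrization and substitution. That existence claim is the entire content of the corollary; without the explicit $u$, and without the verification that the radicand collapses to a square, nothing has been proved. Moreover, the route you chose makes the missing step artificially hard: demanding a point whose $x$-coordinate is \emph{already} a square forces the simultaneous rational-box conditions you describe, whereas positive rank only requires \emph{some} non-torsion rational point --- squareness of the $x$-coordinate then comes for free after doubling, by Lemma~\ref{lem:main24}: for any rational point $P=(x_1,y_1)$ the $x$-coordinate of $[2]P$ is $\bigl((x_1^2-a^2b^2)/(2y_1)\bigr)^2$. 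This is exactly what the paper exploits.

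The paper's proof is a short verification: impose the candidate $x$-coordinates $x=n^2(m^2-n^2)$ and $x=mn(m-n)^2$ on $\Gamma_{a,b}\colon y^2=x(x+a^2)(x+b^2)$. One computes
\begin{align*}
x=n^2(m^2-n^2):&\quad x(x+a^2)(x+b^2)=m^2n^4(m^2-n^2)^2\,(5m^2-n^2),\\
x=mn(m-n)^2:&\quad x(x+a^2)(x+b^2)=m^2n^2(m-n)^4(m+n)^2\,(m^2+3mn+n^2),
\end{align*}
so each hypothesis is precisely the condition that the corresponding point is rational. These $x$-values are positive and distinct from the torsion $x$-coordinates $0,-a^2,-b^2,\pm ab$ (for instance $n^2(m^2-n^2)=ab$ would force $n=2m$, impossible since $m>n$), so the point has infinite order, and Algorithm~1 of \cite{JNT} --- in effect, doubling followed by the correspondence of Lemma~\ref{lem:final24} with $h=1$ --- produces the pythagorean pair $(k,l)$ with $(ak,bl)$ a pythagorean pair. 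Note that your correct instinct that $u$ ``must genuinely involve $\sqrt{5m^2-n^2}$'' is realized here automatically: $u=(x_1^2-a^2b^2)/(2y_1)$ with $y_1=mn^2(m^2-n^2)\sqrt{5m^2-n^2}$ in the first case. As submitted, though, your argument stops exactly where the proof has to begin, so it is incomplete.
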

\begin{proof}
	One gets the quadratic conditions in the statement by imposing each of the points with $x$-coordinates 
	$n^2(m^2-n^2)$ and $mn(m-n)^2$ on the curve $\Gamma_{a^h, b^h}$ respectively. The result is now obtained from \cite[Algorithm~1]{JNT}.  
\end{proof}
\begin{cor}
	Let $(a,b)$ is a {\pp} with $a=m^2-n^2$ and $b=2mn$ such that 
	$$
	\begin{aligned}
	(i)\hspace{-.2cm} && -m^4-4mn^3+n^4  &=  \Box\,, & (iii)\hspace{-.2cm} && m^4-2m^3n+2m^2n^2+2mn^3+n^4 & = \Box\,, \\
	(ii)\hspace{-.2cm} &&  m^4+4m^2n^2-n^4  &=  \Box\,, & (iv)\hspace{-.2cm} && m^4-2m^3n-2m^2n^2-2mn^3+n^4 & = \Box\,. 
	\end{aligned}
	$$
	Then $(a,b)$ is a {\qppp}.
\end{cor}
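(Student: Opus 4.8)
The plan is to run, for the degree-$2$ curve, the same argument used in the previous corollary. The relevant curve is $\Caabb\colon y^2=x(x+a^4)(x+b^4)$, which by \cite{JNT} has torsion group $\tor 2 8$, and by the same reference $(a,b)$ is a \qppp\ if and only if $\Caabb$ has positive rank over $\Q$. Hence it suffices to produce, under each of the four hypotheses, a rational point of infinite order on $\Caabb$.

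First I would write down, for each of (i)--(iv), an explicit candidate $x$-coordinate $x_0=x_0(m,n)$, in the spirit of the abscissae $n^2(m^2-n^2)$ and $mn(m-n)^2$ that produced the two conditions of the previous corollary. A rational point with abscissa $x_0$ lies on $\Caabb$ precisely when $x_0(x_0+a^4)(x_0+b^4)$ is a rational square. Substituting $a=m^2-n^2$, $b=2mn$, using the identity $a^2+b^2=(m^2+n^2)^2$, and discarding square factors, this condition collapses to the single requirement that the corresponding quartic be a square; carrying out the reduction for the four chosen abscissae yields exactly (i), (ii), (iii) and (iv). This is the computational core of the proof, but it is only routine polynomial factorisation.

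The step I expect to be the real obstacle is verifying that the point so obtained has infinite order, rather than being one of the torsion points. This is genuinely necessary: merely exhibiting a rational point is not enough, since $\Caabb$ always carries the rational point with $x$-coordinate $a^2b^2$---for which $a^2b^2+a^4=a^2(m^2+n^2)^2$ and $a^2b^2+b^4=b^2(m^2+n^2)^2$, so that $a^2b^2(a^2b^2+a^4)(a^2b^2+b^4)=\bigl(a^2b^2(a^2+b^2)\bigr)^2$ is automatically a square---yet this point is torsion and encodes only a trivial choice of $(k,l)$. I must therefore check that each candidate $x_0$ differs from the finitely many $x$-coordinates of the torsion points of $\Caabb$, so that the point it defines actually contributes to the rank; granting this, $\Caabb$ has positive rank under each hypothesis.

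Finally, a point of infinite order on $\Caabb$ is fed into \cite[Algorithm~1]{JNT}, which returns an explicit \pp\ $(k,l)$, not a multiple of $(a,b)$, with $(a^2k,b^2l)$ again a \pp; this is precisely the statement that $(a,b)$ is a \qppp. The whole proof is thus the degree-$2$ counterpart of the previous corollary, the only genuinely new work being the four quartic reductions and the torsion check.
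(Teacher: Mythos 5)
Your outline reproduces the paper's strategy in skeleton form, but it has a genuine gap: you never exhibit the four abscissae, and they are the entire mathematical content of this corollary. The paper's proof consists precisely of imposing the points with $x$-coordinates $-8m^2n^4(m+n)^2$, $8m^4n^2(m^2-n^2)$, $8m^3n^3(m^2-n^2)$ and $-8m^3n^3(m+n)^2$ on the curve, which after substituting $a=m^2-n^2$, $b=2mn$ and discarding square factors yields conditions (i), (ii), (iii) and (iv) respectively. Your sentence ``carrying out the reduction for the four chosen abscissae yields exactly (i)--(iv)'' presupposes that suitable abscissae have already been found; but finding them is the one non-routine step, and nothing in your proposal reconstructs them. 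There is no canonical way to reverse-engineer an abscissa from a given quartic condition, so as written the proof cannot be completed: you have a correct template with its only nontrivial input left blank. (A smaller slip: for the degree-$2$ case the paper invokes \cite[Algorithm~2]{JNT}; Algorithm~1 of that paper is the degree-$1$ procedure used in the preceding corollary.)

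On the other hand, your insistence on verifying that the resulting point has infinite order is a legitimate concern that the paper passes over in silence: its proof only adds the remark that each of the quartic conditions (i)--(iv) is itself equivalent to a rank-one elliptic curve (which guarantees infinitely many admissible $(m,n)$, not that the imposed point is non-torsion), and then defers to \cite[Algorithm~2]{JNT}. Your observation that $x=a^2b^2$ always gives a rational point, and that this point is torsion of order~$4$, is correct and shows why some check is needed; to close it one would compare the four abscissae above with the finitely many $x$-coordinates of the $\tor 2 8$ torsion of $\Caabb$, which for nondegenerate $m>n\geq 1$ they avoid. So your proposal is more careful than the paper on the torsion issue, but it is not a proof until the four $x$-coordinates are actually produced and the four reductions to (i)--(iv) are carried out.
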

\begin{proof}
	The quartic conditions (i)--(iv) are obtained by imposing each of the points with $x$-coordinates 
	$-8m^2n^4(m+n)^2$,
	$8m^4n^2(m^2-n^2)$, 
	$8m^3n^3(m^2-n^2)$,
	$-8m^3n^3(m+n)^2$,
	on the curve $\Gamma_{a^h, b^h}$ respectively.
	Note that each of the quartic conditions (i)--(iv) is equivalent to an elliptic curve of rank one. 
	The result now follows from \cite[Algorithm~2]{JNT}.
\end{proof}

\begin{cor}
	Let $(a,b)$ is a {\pp} with $a=m^2-n^2$ and $b=2mn$. Then $(a,b)$ is a {\cppp}. 
\end{cor}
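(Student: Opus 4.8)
The plan is to invoke \THM~\ref{thm:main28}: since $(a,b)$ is a \cppp{} precisely when $\Cabh$ with $h=3$ has positive rank over $\Q$, it suffices to produce a single rational point of infinite order on
$$\C_{a^3,b^3}:\qquad y^2\ =\ x^3+(a^6+b^6)x^2+a^6b^6x\ =\ x(x+a^6)(x+b^6),$$
where the cubic on the right factors simply because it has no constant term. Writing $c:=\sqrt{a^2+b^2}\in\N$, which exists exactly because $(a,b)$ is a \pp, I would propose testing the point whose $x$-coordinate is $x_0:=a^2b^2c^2=(abc)^2$.

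The key step is to check that $x_0$ gives a rational $y$, and here the Pythagorean relation $c^2=a^2+b^2$ is what makes everything collapse. One computes $x_0+a^6=a^2(b^2c^2+a^4)$ and $x_0+b^6=b^2(a^2c^2+b^4)$, and substituting $c^2=a^2+b^2$ forces the two inner factors to coincide,
$$b^2c^2+a^4\ =\ a^2c^2+b^4\ =\ a^4+a^2b^2+b^4.$$
Hence $y^2=a^4b^4c^2\,(a^4+a^2b^2+b^4)^2$ is a perfect square, so
$$P_0\ =\ \bigl(a^2b^2c^2,\ a^2b^2c\,(a^4+a^2b^2+b^4)\bigr)$$
is a rational point on $\C_{a^3,b^3}$. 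This coincidence of the two middle factors is the crux of the argument, and discovering the right $x$-coordinate is the step I expect to be the main obstacle; once it is in hand the verification is a one-line substitution, and one can sanity-check it on the example $(a,b,c)=(3,4,5)$, where $P_0=(3600,346320)$.

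It then remains to confirm that $P_0$ has infinite order. By the Proposition the torsion subgroup of $\C_{a^3,b^3}$ is $\tor 2 4$, and (as listed in the proof of \THM~\ref{thm:main28}) the $x$-coordinates of its finite points are exactly $0,\,-a^6,\,-b^6,\,a^3b^3$ and $-a^3b^3$. Since $a,b,c>0$ we have $x_0=a^2b^2c^2>0$, so $x_0$ is none of $0,-a^6,-b^6,-a^3b^3$; and $x_0=a^3b^3$ would force $c^2=ab$, i.e.\ $a^2+b^2=ab$, which is impossible for positive integers (indeed $a^2+b^2\ge 2ab>ab$). Therefore $P_0$ is not a torsion point, $\C_{a^3,b^3}$ has positive rank over $\Q$, and by \THM~\ref{thm:main28} the \pp{} $(a,b)$ is a \cppp. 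Finally, feeding $[2]P_0$ (a genuine doubled point, hence one satisfying the two square conditions) into \LEM~\ref{lem:final24} produces an explicit companion pair $(k,l)$ realizing the cubic pythapotency.
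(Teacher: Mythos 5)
Your proposal is correct, and while it shares the paper's high-level strategy --- exhibit an explicit rational point of infinite order on $\C_{a^3,b^3}$ and invoke {\THM}\;\ref{thm:main28} --- the point itself and the way you close the argument are genuinely different. The paper takes the point with $x$-coordinate $-16m^4n^4(m^2-n^2)^2=-a^2b^4$ and, instead of checking non-torsion, runs Algorithm~\ref{alg1} on it directly: the $x$-coordinate of $[2]P$ is the square $\bigl(2m^2n^2(m-n)^2(m+n)^2/(m^2+n^2)\bigr)^2$, which yields the closed-form companion pair $k=4mn(m^2+n^2)$, $l=(m-n)^2(m+n)^2$ together with the verified identity $(a^3k)^2+(b^3l)^2=\bigl(4mn(m^4+n^4)(m-n)^2(m+n)^2\bigr)^2$. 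What the paper's route buys is this explicit parametrized family of pairs $(k,l)$; what yours buys is twofold. First, your point $x_0=(abc)^2$ is verified using only $c^2=a^2+b^2$ (the coincidence $b^2c^2+a^4=a^2c^2+b^4=a^4+a^2b^2+b^4$ is correct, and your sanity check $(3600,346320)$ on $\C_{27,64}$ is valid), so the $(m,n)$-parametrization in the hypothesis is never used --- your argument in fact covers \emph{every} {\pp}, including non-primitive ones such as $(9,12)$ that are not of the form $(m^2-n^2,2mn)$. Second, your argument is logically self-contained on the infinite-order question: the paper's proof tacitly assumes its point has infinite order when applying Algorithm~\ref{alg1} (the verified identity rescues the conclusion, but this is left implicit), whereas your exclusion of $x_0$ from the torsion $x$-coordinates $\{0,-a^6,-b^6,\pm a^3b^3\}$ --- with $x_0>0$ and $x_0=a^3b^3$ forcing the impossible $a^2+b^2=ab$ --- settles it outright. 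Your caution in passing to $[2]P_0$ before extracting $(k,l)$ via {\LEM}\;\ref{lem:final24} is also well placed: since $x_0+a^6=a^2(a^4+a^2b^2+b^4)$ need not be a square, $P_0$ itself need not be the double of a rational point, and the forward direction of that lemma rests on {\LEM}\;\ref{lem:main24}, which applies to doubled points. The only cost of your route relative to the paper's is that you do not exhibit the pair $(k,l)$ in closed form.
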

\begin{proof}
	Since the curve $\C_{a^3,b^3}$ owns the non-obvious rational point 
	$$P=\bigl(-16(m^2-n^2)^2m^4n^4, 16(m^2-n^2)^2m^4n^4(m^2+n^2)(m^4-6m^2n^2+n^4)\bigr),$$
	the result immediately comes from Algorithm~\ref{alg1} as follows. The $x$-coordinate of the point $[2]P$ is 
	$$\bigl({2m^2n^2(m-n)^2(n+m)^2}/{(m^2+n^2)}\bigr)^2$$
	which, by applying Algorithm~\ref{alg1}, leads to
	$$k=4mn(m^2+n^2), \qquad l=(m-n)^2(n+m)^2,$$
	with 
	$$(a^3k)^2+(b^3l)^2=(4mn(m^4+n^4)(m-n)^2(n+m)^2)^2.$$
\end{proof}

We conclude the paper with two open problems.
	\begin{ques}
	Given an arbitrary {\pp} $(a,b)$. 
	Is there an $h\geq 4$ such that $(a,b)$ is a {\hppp}?
	\end{ques}
	
	\begin{ques}
	Given an arbitrary $h\geq 4$.
	Is there a {\pp} $(a,b)$ which is a {\hppp}? Or equivalently: Is there a {\pp} $(a,b)$
	such that $\Cabh$ has positive rank over $\Q$?
	\end{ques}

\noindent{\bf Declarations of interest:} none



\bibliographystyle{plain}

\end{document}